\numberwithin{equation}{section}
\newcommand\ls{\lesssim}
\newcommand\gs{\gtrsim}
\newtheorem{theorem}{Theorem}[section]
\newtheorem{lemma}[theorem]{Lemma}
\newtheorem{Proposition}[theorem]{Proposition}
\DeclarePairedDelimiter{\ceil}{\lceil}{\rceil}
\title[Sum-Product estimates for diagonal matrices]{Sum-Product estimates for diagonal matrices}
\author[Akshat Mudgal]{Akshat Mudgal}
\subjclass[2010]{11B30} 
\keywords{Arithmetic combinatorics, Sum-product estimates}
\date{} 
\address{Department of Mathematics, Purdue University, 150 N. University Street, West Lafayette, IN 47907-2067, USA}
\email{am16393@bristol.ac.uk, amudgal@purdue.edu}
\begin{document}

\maketitle
\begin{abstract}
Given $d \in \mathbb{N}$, we establish sum-product estimates for finite, non-empty subsets of $\mathbb{R}^d$. This is equivalent to a sum-product result for sets of diagonal matrices. In particular, let $A$ be a finite, non-empty set of $d \times d$ diagonal matrices with real entries. Then for all $\delta_1 < 1/3 + 5/5277$, we have
\[  |A+A| + |A\cdot A| \gg_{d} |A|^{1 + \delta_{1}/d}. \]
In this setting, the above estimate quantitatively strengthens a result of Chang.
\end{abstract}

\section{Introduction}

Let $d$ be a natural number, and let $a = (a_1, \dots, a_d)$ and $b = (b_1, \dots, b_d)$ be elements of $\mathbb{R}^d$. We can define the sum and product of $a$ and $b$ as
\[ a + b = (a_1 + b_1, \dots, a_d + b_d) \ \text{and} \ a \cdot b = (a_1 b_1, \dots, a_d b_d) \]
respectively. In general let $R$ be a ring and let $A,B$ be finite, non-empty subsets of $R$. We can then define the sumset and the product set of $A$ and $B$ as 
\[ A+ B = \{ a + b \ | \ a \in A, b \in B \} \ \text{and} \ A \cdot B = \{ a \cdot b \ | \ a \in A, b \in B \} \]
respectively. For our purposes, $R$ will either be $\mathbb{R}^d$, or the set of $d \times d$ matrices with real entries, with $d$ being some natural number. 
\par

For ease of exposition, we will use Vinogradov notation, that is, we will write $X \gg Y$, or equivalently $Y \ll X$, to mean $|X| \geq C |Y|$ where $C$ is some positive constant. Moreover, we write $X \gs Y$, or equivalently $Y \ls X$, to mean $|X| \geq C|Y| (\log{|A|})^{D}$ where $C$ and $D$ are constants, and $C>0$. The corresponding notation $X \gg_{z} Y$ and $X \gs_{z} Y$ are defined similarly, except in this case the constants $C$ and $D$ will depend on the parameter $z$.
\par

When $d=1$, the sum-product conjecture states that for all finite, non-empty sets $A \subseteq \mathbb{R}$, we have
\begin{equation} \label{sumprodreal}
 |A+A| + |A\cdot A| \gs |A|^{1 + \delta_{1}},
 \end{equation}
for all $\delta_{1} \in (0,1)$. We will use $\delta_1$ to denote the best constant for which we know $\eqref{sumprodreal}$ to hold. The current record in this direction rests with Shakan \cite{Sh2018} who showed that $\delta_{1} = 1/3 + 5/5277$ is permissible\footnote{We note a recent improvement in this direction by Rudnev and Stevens who show that $\delta_1 <1/3 +  2/1167$ is permissible in $\eqref{sumprodreal}$, see preprint arXiv:2005.11145.}. Our main result extends the sum-product phenomenon to sets in $\mathbb{R}^d$ for $d >1$. 

\begin{theorem} \label{main}
Let $d \in \mathbb{N}$ and let $A \subseteq \mathbb{R}^d$ be a finite, non-empty set. Then we have
\[ |A+A| + |A\cdot A| \gtrsim_{d} |A|^{1 + \delta_1 /d}.\]

\end{theorem}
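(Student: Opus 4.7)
The plan is to prove the theorem by induction on the dimension $d$, with Shakan's one-dimensional sum-product estimate serving as the base case $d=1$. Let $A \subseteq \mathbb{R}^d$ with $|A| = n$, and for each $i$ write $A_i = \pi_i(A)$. Since $n \le \prod_i |A_i|$, pigeonhole yields a coordinate --- WLOG the first --- with $|A_1| \ge n^{1/d}$. I will then decompose $A = \bigsqcup_{x \in A_1}\{x\} \times A_x$ with $A_x \subseteq \mathbb{R}^{d-1}$ and apply a dyadic pigeonhole on the fiber sizes to extract a scale $L$ and a set $S \subseteq A_1$ such that $|A_x| \asymp L$ for every $x \in S$ and $|S| \cdot L \gtrsim n$. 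Writing $s = |S|$, the argument splits into two regimes according to the size of $s$ relative to $n^{1/d}$.

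In the projection regime $s \gtrsim n^{1/d}$, I lift through the projection $S$: for every $z \in S + S$, the slice of $A + A$ at first coordinate $z$ contains some $A_x + A_y$ with $x, y \in S$ and $x + y = z$, which has size at least $L$. Hence $|A + A| \gtrsim L \cdot |S + S|$, and symmetrically $|A \cdot A| \gtrsim L \cdot |S \cdot S|$. Applying Shakan's theorem to $S$ gives $|S+S| + |S \cdot S| \gtrsim s^{1+\delta_1}$, and combining with $sL \gtrsim n$ produces
\[
|A+A| + |A\cdot A| \gtrsim L \, s^{1+\delta_1} = s^{\delta_1}(sL) \gtrsim s^{\delta_1} n \gtrsim n^{1 + \delta_1/d}.
\]

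In the fiber regime $s < n^{1/d}$, we have $L \gtrsim n^{1 - 1/d}$, so many large fibers are available. By the inductive hypothesis applied to each fiber, $|A_x + A_x| + |A_x \cdot A_x| \gtrsim L^{1 + \delta_1/(d-1)}$ for every $x \in S$. The diagonal slices $\{2x\} \times (A_x + A_x) \subseteq A + A$ are pairwise disjoint in $x \in S$, and the product slices $\{x^2\} \times (A_x \cdot A_x) \subseteq A \cdot A$ are likewise disjoint after restricting to fibers of a fixed sign in the first coordinate (with a harmless loss of a factor of $2$). Summing over $x \in S$ therefore yields
\[
|A+A| + |A\cdot A| \gtrsim s \, L^{1 + \delta_1/(d-1)} = (sL) \cdot L^{\delta_1/(d-1)} \gtrsim n \cdot (n^{1-1/d})^{\delta_1/(d-1)} = n^{1 + \delta_1/d}.
\]

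The hard part will be orchestrating the case split so that the cutoff $s \sim n^{1/d}$ cleanly balances the two sub-arguments; it is exactly this balance that produces the exponent $\delta_1/d$ in the conclusion. Routine technical issues requiring attention include the sign ambiguity $x^2 = (-x)^2$ when reading off product slices (resolved by the sign restriction above) and the polylogarithmic factors incurred by the dyadic pigeonhole --- which is precisely why the conclusion of the theorem is stated with $\gtrsim_d$ rather than $\gg_d$.
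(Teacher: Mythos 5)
Your argument takes essentially the same inductive road as the paper, organized slightly differently: you fix one coordinate and dyadically pigeonhole the fiber sizes, splitting into a ``large projection'' case and a ``large fibers'' case, whereas the paper first pigeonholes among all axis-aligned parallel families, proves both lower bounds simultaneously (Proposition~3.2), and then optimizes over the dyadic scale. Those two organizations are equivalent and your balancing at $s\sim n^{1/d}$ is correct; the fiber-regime computation and the exponent bookkeeping all check out. But there is a genuine gap in the projection regime, and it is not a routine technicality.

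You claim that for $x,y\in S$ with $xy=w$ the slice of $A\cdot A$ at first coordinate $w$ has size $\geq L$ because it contains $A_x\cdot A_y$ and $|A_x\cdot A_y|\geq L$. That last inequality can fail: coordinatewise multiplication is not injective in the presence of zeros. For example, take $d=3$ and $A_x=\{0\}\times B$, $A_y=B\times\{0\}$ with $|B|=L$; then $A_x\cdot A_y=\{(0,0)\}$. One can arrange a whole set $A$ in which, after the dyadic pigeonhole, essentially every $w\in S\cdot S$ has only factorizations $w=xy$ for which $A_x\cdot A_y$ collapses this way (e.g.\ $S=S_1\sqcup S_2$ with $S_1,S_2$ geometric progressions in coprime bases, $A_x=\{0\}\times B$ for $x\in S_1$ and $A_x=B\times\{0\}$ for $x\in S_2$, so that $S\cdot S$ is dominated by $S_1\cdot S_2$ and every such $w$ has a unique factorization across the two halves). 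In that situation $|A\cdot A|\ll L\,|S\cdot S|$, and since Shakan's bound is on $|S+S|+|S\cdot S|$ rather than on $|S+S|$ alone, your chain of inequalities breaks. Your ``sign restriction'' only fixes the \emph{first} coordinate (the collision $x^2=(-x)^2$); the collapse above occurs in the remaining $d-1$ coordinates and is unaffected.

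The paper preempts exactly this by a preliminary pigeonhole on sign patterns: it passes to a subset $A_1\subseteq A$ with $|A_1|\geq 3^{-d}|A|$ such that in each coordinate $i$, every element of $A_1$ has the same sign, or all are zero (condition~(3.3)). Under that condition, multiplying by any fixed element of a fiber is injective on another fiber (a zero coordinate in one forces a zero in the other), so $|A_x\cdot A_y|\geq\min(|A_x|,|A_y|)\geq L$ and both slice bounds go through, as does the disjointness of the product slices across distinct translates. You should insert this $3^{-d}$-pigeonhole at the very start; with it, the rest of your proposal is sound, and it is essentially the paper's argument specialized to the $(1,d-1)$ splitting.
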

\par

We note that in $\mathbb{R}^2$, Theorem $\ref{main}$ is conjecturally optimal. In particular, if we assume that the sum-product conjecture holds, then we can set $\delta_1 = 1 - \varepsilon$, for any $\varepsilon > 0$. Combining this with Theorem $\ref{main}$, we find that for all $\varepsilon > 0$, and finite, non-empty sets $A \subseteq \mathbb{R}^2$, we have
\begin{equation} \label{conjineqeq}
  |A+A| + |A\cdot A| \gs |A|^{1 + 1/2 - \varepsilon/2} = |A|^{3/2 - \varepsilon/2}.
\end{equation}
This is optimal up to factors of $|A|^{\varepsilon}$, as we see with the following example. Let $A_N, B_N$ be sets of real numbers defined as
\[ A_N = \{1, 2, \dots, N\} \ \text{and} \ B_N = \{ 2, 4, \dots, 2^N \}. \]
Moreover, we let $C_N = A_N \times B_N \subseteq \mathbb{R}^2$. 
We note that 
\[ |C_N + C_N| = |A_N + A_N| |B_N + B_N| \ll N N^2 = N^{3} = |C_N|^{3/2}, \]
and similarly, 
\[ |C_N \cdot C_N| = |A_N \cdot A_N| |B_N \cdot B_N| \ll N^{2} N = |C_N|^{3/2}.\]
Thus we have
\begin{equation} \label{limit}
 |C_N + C_N| + |C_N \cdot C_N| \ll |C_N|^{1 + 1/2},
 \end{equation}
which matches the conjectural lower bound $\eqref{conjineqeq}$ up to factors of $|A|^{\varepsilon}$.
\par

We observe that by our definition of multiplication in $\mathbb{R}^d$, for every subset $A \subseteq \mathbb{R}^d$, there is a corresponding set $B$ of $d \times d$ diagonal matrices with real entries, such that $|A+A| = |B+B|$ and $|A\cdot A| = |B\cdot B|$, and vice versa. Thus Theorem $\ref{main}$ is equivalent to the following result. 

\begin{theorem} \label{sumprodd}
For all finite, non-empty sets $A$ of $d \times d$ diagonal matrices with real entries, we have
\begin{equation} \label{sumproddiag}
 |A+A| + |A\cdot A| \gs_{d} |A|^{1 + \delta_{1}/d}.
 \end{equation}
\end{theorem}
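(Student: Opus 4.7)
The natural strategy is to induct on the dimension $d$, with the base case $d=1$ being precisely Shakan's estimate \eqref{sumprodreal}. For $d\geq 2$ I would exploit the product structure $\mathbb{R}^d=\mathbb{R}^{d-1}\times\mathbb{R}$, fibering $A$ over its projection to the first $d-1$ coordinates and combining a one-dimensional sum-product bound on the fibers with the inductive bound on the projection.

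\textbf{Fibering and two bounds.} Let $\pi:\mathbb{R}^d\to\mathbb{R}^{d-1}$ denote the projection onto the first $d-1$ coordinates, set $A'=\pi(A)$, and let $A_x=\{t\in\mathbb{R}:(x,t)\in A\}$ for each $x\in A'$, so that $|A|=\sum_{x\in A'}|A_x|$. Coordinate-wise addition and multiplication respect the fibration: the slice of $A+A$ above $2x$ contains $A_x+A_x$, and the slice of $A\cdot A$ above $x^2=(x_1^2,\dots,x_{d-1}^2)$ contains $A_x\cdot A_x$, with the map $x\mapsto x^2$ at most $2^{d-1}$-to-one and $x\mapsto 2x$ injective. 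Applying the one-dimensional estimate \eqref{sumprodreal} to each fiber $A_x\subseteq\mathbb{R}$, summing, and invoking Jensen's inequality produces the \emph{fiber bound}
\[
|A+A|+|A\cdot A|\ \gs_d\ \sum_{x\in A'}|A_x|^{1+\delta_1}\ \geq\ \frac{|A|^{1+\delta_1}}{|A'|^{\delta_1}}.
\]
Separately, from $A'+A'=\pi(A+A)$ and $A'\cdot A'=\pi(A\cdot A)$ one has $|A'+A'|\leq|A+A|$ and $|A'\cdot A'|\leq|A\cdot A|$, so the inductive hypothesis applied to $A'\subseteq\mathbb{R}^{d-1}$ yields the \emph{projection bound}
\[
|A+A|+|A\cdot A|\ \gs_{d-1}\ |A'|^{1+\delta_1/(d-1)}.
\]

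\textbf{Balancing and main obstacle.} Writing $|A'|=|A|^\theta$, the fiber bound delivers the target exponent $1+\delta_1/d$ precisely when $\theta\leq(d-1)/d$, while the projection bound suffices once $\theta\geq(d-1)(d+\delta_1)/(d(d-1+\delta_1))$; these two regimes leave an intermediate window in which neither bound alone reaches $|A|^{1+\delta_1/d}$. Bridging this gap is the main technical difficulty I anticipate. I would try to handle it by first passing, via dyadic pigeonholing, to a subset of $A$ whose fibers are of nearly uniform size, and then iterating the slicing along additional coordinate directions or combining Pl\"unnecke--Ruzsa-type estimates applied to $A+A$ and $A\cdot A$ simultaneously, so as to sharpen the available control on $|A'|$. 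The delicate point is to guarantee that the inductive constant consolidates cleanly as $\delta_1/d$ at each step, rather than degrading.
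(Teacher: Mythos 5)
Your framework---induction on $d$, fibering over a lower-dimensional projection, and playing a fiber bound against a base bound---matches the paper's strategy, and you have correctly located the obstruction. However, the fix you suggest (Pl\"unnecke--Ruzsa, iterated slicing) is not what is needed, and a genuine gap remains in your projection bound.

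The missing step: after dyadic pigeonholing the fibers to a common size $N$ (which you propose), do not simply project. Let $B'$ be the surviving base. Distinct sums $x+y\neq x'+y'$ in $B'+B'$ give slices $A_x+A_y$ and $A_{x'}+A_{y'}$ of $A+A$ that are disjoint, and each has size at least $\min(|A_x|,|A_y|)\geq N$. Thus $|A+A|\geq N\,|B'+B'|$, not merely $|A+A|\geq|A'+A'|$; the same holds for products once one first restricts $A$ to a single sign class in each coordinate (the paper does this by a preliminary pigeonholing, which is cleaner than tracking the $2^{d-1}$-to-one map $x\mapsto x^2$). Applying the inductive hypothesis to $B'\subseteq\mathbb{R}^{d-1}$ then yields
\[
|A+A|+|A\cdot A|\ \gs_d\ N\,|B'|^{\,1+\delta_1/(d-1)},
\]
and since $N\,|B'|\gs|A|/\log|A|$ after the pigeonholing, writing $N=|A|^{\beta}$ gives $\max\bigl(\beta\delta_1,\,(1-\beta)\delta_1/(d-1)\bigr)\geq\delta_1/d$ for every $\beta$, so one of your two bounds always reaches the target. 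This factor of $N$---obtained purely from a disjointness observation, not from additive-combinatorial machinery---is exactly what closes the intermediate window you identified; your stated projection bound discards it, which is why the balance fails. (A secondary difference: the paper pigeonholes over axis-aligned affine subspaces of an arbitrary dimension $r$ and type, and runs a dichotomy on whether $A$ concentrates in rich such subspaces, projecting to a single coordinate and invoking $d=1$ directly when it does not; but that is bookkeeping, and the essential idea you are missing is the $N$-multiplier.)
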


We now remark upon the sum-product conjecture, in the setting of $d \times d$ matrices. Unlike the case of real numbers, whenever $d \geq 2$, we have arbitrarily large finite sets of $d \times d$ matrices with real entries, that have a small sumset and product set. In particular, we consider the following example from \cite{SW2018}, namely
\begin{equation} \label{defdn}
 D_N = \bigg\{ \begin{pmatrix}  1 & i/N \\ 0 & 1 \end{pmatrix} \ \bigg| \ 1 \leq i \leq N  \bigg\}. 
 \end{equation}
We see that
\[ |D_N + D_N| = |D_N \cdot D_N| = 2N - 1 \ll |D_N|.\]
Thus, in the case of general $d \times d$ matrices with real entries, the sum-product conjecture fails to be true by a large margin. Moreover, even if we specify $A$ to be a set of $d \times d$ diagonal matrices, we see that example $\eqref{limit}$ restricts the range of $\delta$ for which we can expect a variant of $\eqref{sumprodreal}$ to hold. Hence, a more interesting question in this setting is finding what conditions must a set $A$ of $d \times d$ matrices with real entries satisfy, such that we have
\[ |A+A| + |A \cdot A| \gg |A|^{1+ \delta} , \]
for some fixed $\delta > 0$. A further quantitative aspect of this question is studying the relation between the conditions assumed on the set $A$, and the range of valid $\delta$ that these conditions prescribe.
\par

This problem has been analysed for several different cases, with varying quantitative bounds. For instance, Chang \cite{Ch2007} considered two cases, first when $A$ is a set of symmetric matrices with real entries, and second, when all elements in the set $(A-A)\setminus \{0\}$ are invertible. The latter situation was further explored by Solymosi and Vu \cite{SV2009}, who showed that upon the additional assumption that the condition number\footnote{See definition $\eqref{rice}$.} of all elements of $A$ was uniformly bounded, one can obtain significantly strengthened estimates. For our purposes, we focus on the case when $A$ is a set of $d \times d$ diagonal matrices, but we do not impose any further restrictions on the invertibility of non-zero elements of the set $A-A$, nor do we make any assumptions on the boundedness of the condition number of elements of $A$. While this setting is a special case of $A$ being a set of symmetric matrices, the bounds that we obtain are quantitatively much more stronger than those in \cite{Ch2007}. We present these comparisons in a more detailed manner in \S2.
\par

We will now comment briefly on the sum-product phenomenon itself. The sum-product conjecture was first posed by Erd\H{o}s and Szemer\'{e}di in \cite{ES1983}. Since then, numerous authors have worked on estimates of the form $\eqref{sumprodreal}$ in the case of of $A$ being either a subset of real numbers or some finite field. We refer the reader to \cite{GS2016} and \cite{Sh2018} for more details on these results. In recent years, considerable work has also been done to extend these bounds to other rings and fields. For instance, we now have sum-product estimates for complex numbers (see \cite{Ch2003, KR2013, So2005a}), quaternions (see \cite{BL2019, Ch2003, SW2018}), square matrices (see  \cite{Ch2007, ST2012, SV2009, SW2018, Ta2009}) and Function fields (see \cite{BJ2014}). 
\par

We will use \S3 to prove Theorem $\ref{main}$, and so, we end this section with a brief outline of our proof. The core of our argument rests on analysing how our set $A$ interacts with a particular class of affine subspaces, which we will call \emph{axis aligned affine subspaces}. Our proof then splits into two cases. In the first case, we will assume that a significant fraction of $A$ lies in a collection of axis aligned affine subspaces. We will use this to extract a rough Cartesian-product like structure in our set $A$, which will then allow us to perform induction on the dimension $d$. In the second case, we will use the absence of this Cartesian-product type structure in $A$ to directly reduce our problem to the one-dimensional case. 
\par

{\bf Acknowledgements.} The author's work was supported in part by a studentship sponsored by a European Research Council Advanced Grant under the European Union's Horizon 2020 research and innovation programme via grant agreement No.~695223. The author is grateful for support and hospitality from the University of Bristol and Purdue University. The author is thankful to Alberto Espuny D\'{i}az for pointing to this problem, and to Trevor Wooley for helpful discussions. The author would also like to thank the anonymous referee for many helpful comments.


\section{Previous results}

We utilise this section to describe some of the earlier work on sum-product estimates for square matrices. We begin by recording a result of Chang \cite[Theorem B]{Ch2007} which states that for all $d \in \mathbb{N}$, there exists $\varepsilon_{d} >0$ such that for all finite, non-empty sets $A$ of $d \times d$ symmetric matrices with real entries, one has
\begin{equation} \label{chang07}
 |A+A| + |A\cdot A| \gg |A|^{1 + \varepsilon_{d}}. 
 \end{equation} 
In the case when we specify $A$ to be a collection of diagonal matrices, Theorem $\ref{sumprodd}$ quantitatively strengthens $\eqref{chang07}$, allowing $\varepsilon_{d} = \delta_{1}/d - o(1)$. Moreover, noting $\eqref{limit}$, we observe that this estimate is conjecturally optimal in the case $d=2$. 
\par
%

Before proceeding further, we note a preliminary definition. Thus, we write the condition number $\kappa(a)$ of a $d \times d$ matrix $a$ to be 
\begin{equation} \label{rice}
\kappa(a) = \sigma_{\max}(a)\sigma_{\min}(a)^{-1},
\end{equation}
where $\sigma_{\max}(a)$ and $\sigma_{\min}(a)$ are the largest and smallest singular values of $a$ respectively. In particular, the smaller the condition number of a matrix, the farther it is from being singular. 
\par

As we mentioned previously, work has been done on sum-product estimates for more general square matrices than just diagonal or symmetric matrices, but Theorem \ref{sumprodd} and inequality $\eqref{chang07}$ seem to be the only results that do not assume that
\begin{equation} \label{condition1}
\det(a-a') \neq 0 \ \text{for all distinct} \ a,a' \in A,
\end{equation}
or require that there exists some small $\kappa > 0$ such that
\begin{equation} \label{condition2}
 \kappa(a) \leq \kappa \ \text{for all} \ a \in A,
 \end{equation}
where $\kappa(a)$ is the condition number of $a$. Consequently, the techniques that were used in previous results do not seem to generalise directly in our setting. 
\par

We will now make some remarks regarding conditions $\eqref{condition1}$ and $\eqref{condition2}$. If we assume our set $A$ in Theorem $\ref{sumprodd}$ satisfies $\eqref{condition1}$, then we directly obtain
\[ |A+A| + |A\cdot A| \gs |A|^{1 + \delta_1},\]
in which case we can use sum-product estimates for real numbers to get much stronger lower bounds. Thus for diagonal matrices, the more difficult case is when $A$ does not satisfy condition $\eqref{condition1}$. On the other hand, for more general matrices, $\eqref{condition1}$ seems to be a necessary condition. In particular, we consider the set $D_N$ as defined in $\eqref{defdn}$. We see that for all distinct $a, b \in D_N$, we have $\det(a-b) = 0$, as well as 
\[ |D_N + D_N| = |D_N \cdot D_N| = 2N - 1 \ll |D_N|.\]
\par

As for the second condition, there are two results that we will mention. In order to state these, we first give a preliminary definition. Given $\kappa \geq 1$ and a finite, non-empty set $A$ of $d \times d$ matrices with complex entries, we write $A$ to be $\kappa$-\emph{well conditioned} if $A$ and $\kappa$ satisfy $\eqref{condition2}$. Let $A$ be a finite, non-empty, $\kappa$-well conditioned set of $d \times d$ matrices $A$ with complex entries.  Solymosi and Vu \cite{SV2009} showed that if $\eqref{condition1}$ holds, one has
 \begin{equation} \label{SoVu}
  |A+A| + |A\cdot A| \gg_{\kappa,d} |A|^{1 + 1/4}. 
  \end{equation}
Similarly, Solymosi and Wong \cite{SW2018} proved that if all the elements of $A$ are invertible, and if for all $a,b,c,d \in A$, one either has
\[ a\cdot b^{-1} = c \cdot d^{-1} \ \text{or} \ \det(a \cdot b^{-1} - c \cdot d^{-1}) \neq 0,\]
then one can show that
\begin{equation} \label{SoWo}
 |A+A| + |A\cdot A| \gg_{\kappa,d} |A|^{1 + 1/3}{(\log |A|)^{-1/3}}. 
 \end{equation}
\par

We note that while $\eqref{SoVu}$ and $\eqref{SoWo}$ imply better and uniform exponents in $\eqref{sumproddiag}$, they only work for $\kappa$-well conditioned sets. Moreover, as the implicit constants in $\eqref{SoVu}$ and $\eqref{SoWo}$ depend on $\kappa$,  the parameter  $\kappa$ can not grow too fast. In particular, if our set $A$ in Theorem $\ref{sumprodd}$ was $\kappa$-well conditioned, we would have
\[ \kappa^{-1} \leq |a_{ii}|/|a_{jj}| \leq \kappa \ (1 \leq i,j \leq d),\]
for each $a = \{a_{ij} \}_{1 \leq i,j \leq n} \in A$. This restricts how sparse our set $A$ can get. Thus while Theorem $\ref{sumprodd}$ does not improve the known results for well conditioned sets of diagonal matrices, it is applicable to a more general class of diagonal matrices. 
\par


\section{Proof of Theorem $\ref{main}$}

We use this section for proving Theorem $\ref{main}$. Our proof will proceed through induction on the dimension $d$. Our base case will be when $d=1$, which follows from the definition of $\delta_1$. Thus we mainly need to focus on the inductive step. Consequently, we can assume that Theorem $\ref{main}$ holds for all finite, non-empty sets of $\mathbb{R}^{d'}$ where $1 \leq d' < d$. With this in mind, we define the parameter $\delta_{u} = \delta_1 u^{-1}$ for each $u \in \mathbb{N}$. 
\par

Let $H$ be an affine subspace of $\mathbb{R}^d$. We write $H$ to be an axis aligned affine subspace if $H = X_1 \times X_2 \times \dots \times X_d$, where $X_i = \{a_i\}$ for some $a_i \in \mathbb{R}$, or $X_i = \mathbb{R}$, for each $1 \leq i \leq d$.

\begin{lemma} \label{aasub}
Let $H$ be an axis aligned affine subspace of dimension $1 \leq r \leq d-1$ in $\mathbb{R}^d$, and let $A$ be a finite, non-empty subset of $H$. Then we have
\[|A+A| + |A\cdot A| \gs_{r} |A|^{1 + \delta_r}. \]
\end{lemma}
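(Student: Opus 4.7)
The plan is to reduce the statement to the inductive hypothesis by projecting onto the free coordinates of $H$. Since $H$ is axis aligned of dimension $r$, we can write $H = X_1 \times \cdots \times X_d$ where exactly $r$ of the factors equal $\mathbb{R}$ (the \emph{free} coordinates) and the remaining $d-r$ are singletons $\{c_i\}$ (the \emph{fixed} coordinates). Let $I \subseteq \{1,\dots,d\}$ denote the set of indices of the free coordinates, and let $\pi : \mathbb{R}^d \to \mathbb{R}^r$ be the projection onto these coordinates.

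The crux of the argument is the following observation. Because multiplication on $\mathbb{R}^d$ is performed coordinatewise, the set $H$ is closed under both coordinatewise addition and multiplication in each fixed coordinate in a trivial sense: for any $a,b \in H$ and any $i \notin I$, we have $(a+b)_i = 2c_i$ and $(a \cdot b)_i = c_i^2$, values that depend only on $H$ and not on $a,b$. Hence the fixed coordinates of every element of $A+A$ are all equal to $2c_i$, and the fixed coordinates of every element of $A\cdot A$ are all equal to $c_i^2$. Consequently, setting $A' = \pi(A) \subseteq \mathbb{R}^r$, the restriction of $\pi$ to each of $A$, $A+A$, and $A\cdot A$ is a bijection onto $A'$, $A' + A'$, and $A' \cdot A'$ respectively. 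This yields
\[ |A| = |A'|, \qquad |A+A| = |A'+A'|, \qquad |A \cdot A| = |A' \cdot A'|. \]

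Since $1 \leq r \leq d-1$, the inductive hypothesis on Theorem \ref{main} (valid in dimensions strictly less than $d$) applies to $A' \subseteq \mathbb{R}^r$, giving
\[ |A'+A'| + |A' \cdot A'| \gtrsim_r |A'|^{1 + \delta_1/r} = |A'|^{1 + \delta_r}. \]
Combining this with the identities above yields the desired bound $|A+A| + |A\cdot A| \gtrsim_r |A|^{1+\delta_r}$.

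There is essentially no obstacle in this argument; the only point to watch is that the axis aligned hypothesis is genuinely needed for the multiplicative side. For a general $r$-dimensional affine subspace the fixed directions would mix with the free ones under coordinatewise multiplication, and the projection $\pi$ would not preserve $|A \cdot A|$. The axis aligned assumption decouples the fixed coordinates completely from the free ones under both $+$ and $\cdot$, which is precisely what makes the reduction to dimension $r$ clean.
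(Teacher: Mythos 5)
Your proof is correct and follows essentially the same route as the paper: project onto the free coordinates, observe that the fixed coordinates decouple under both $+$ and $\cdot$, and invoke the inductive hypothesis for $\mathbb{R}^r$. Your phrasing via a bijection $\pi : A+A \to A'+A'$ is slightly cleaner than the paper's construction of an injective section $\pi^{-1}$, and it makes explicit the needed fact $|A| = |A'|$, but the underlying idea is identical.
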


\begin{proof}
Without loss of generality, we can assume that 
\[H = \{a_1\} \times \{a_2\} \times \dots \times \{a_{d-r}\} \times \mathbb{R} \times \dots \times \mathbb{R}. \]
We define a map $\pi : \mathbb{R}^d \to \mathbb{R}^r$ such that $\pi(x_1, \dots, x_d) = (x_{d-r+1}, \dots, x_d)$. We write $A' = \pi(A) \subseteq \mathbb{R}^r$, and from our induction hypothesis, we have
\begin{equation} \label{triv111}
 |A'+A'| + |A'\cdot A'| \gs_{r} |A'|^{1 + \delta_1 /r} \gs_{r} |A'|^{1 + \delta_r} .
 \end{equation}
For each $a' \in A'$, we fix a corresponding element $\pi^{-1}(a') \in A$ such that $\pi(\pi^{-1}(a')) = a'$. We note that for each sum $a' + b'$ in $A' + A'$, there is at least one corresponding sum $\pi^{-1}(a') + \pi^{-1}(b')$ in $A+A$. Moreover, if $a'+b' \neq c'+d'$, then $\pi^{-1}(a') + \pi^{-1}(b') \neq \pi^{-1}(c') + \pi^{-1}(d')$. Thus we have 
\[ |A+A| \geq |A' + A'|. \]
A similar argument for product sets shows that
\[ |A \cdot A| \geq |A' \cdot A'| . \]
We combine these two inequalities with $\eqref{triv111}$ to prove the lemma.
\end{proof}

We now begin the inductive step. Let $A \subseteq \mathbb{R}^d$ be a finite, non-empty set. Using the pigeonhole principle, we find a set $A_1 \subseteq A$ such that $|A_1| \geq 3^{-d}|A| $, and for every choice of $a , b \in A_1$, we have either
\begin{equation} \label{fml1}
 a_i b_i > 0 \ \text{or} \ a_i = b_i = 0, \ \text{for each} \ 1 \leq i \leq d.
 \end{equation}
Thus if we prove Theorem $\ref{main}$ for the set $A_1$, we can use the fact that 
\[ |A+A| + |A\cdot A| \geq |A_1 + A_1| + |A_1 \cdot A_1|, \ \text{and} \ |A_1| \gg_{d} |A|  \]
to finish the proof. This means that from this point, we can assume that our set $A$ satisfies $\eqref{fml1}$.
\par

Let $M >1$ be a large enough constant depending only on $d$, and let $\mathscr{F}$ be the collection of axis aligned affine subspaces $H$ that contain at least one element of $A$. We write $\mathscr{F}_1$ to be a subset of $\mathscr{F}$ such that for all $H \in \mathscr{F}_1$, we have $|H \cap A| \geq M$.
\par

We first consider the case when
\begin{equation} \label{structurecase}
\sum_{H \in \mathscr{F}_1} | H \cap A| \geq |A|/10^d.
\end{equation}
Note that up to translation, we have at most $2^d$ types of axis aligned affine subspaces in $\mathbb{R}^d$.

Thus, we can apply the pigeonhole principle along with $\eqref{structurecase}$, to find $\mathscr{F}_2 \subseteq \mathscr{F}_1$ such that 
\begin{equation} \label{pgn1}
\sum_{H \in \mathscr{F}_2} | H \cap A| \geq |A|/20^d,
\end{equation}
and all the affine subspaces in $\mathscr{F}_2$ are translates of some $r$-dimensional subspace, with $1 \leq r \leq d-1$. In other words, all affine subspaces in $\mathscr{F}_2$ are parallel, and consequently, disjoint. We see that $\eqref{pgn1}$ implies that
\[ \sum_{j=0}^{\ceil{\log|A|}} \sum_{\substack{  2^{j} \leq | H \cap A|< 2^{j+1} \\ H \in \mathscr{F}_2 }} |H \cap A| \geq |A|/20^d. \]
We now use pigeonhole principle to infer that there exists $\mathscr{F}_3 \subseteq \mathscr{F}_2$ and $I \in \mathbb{N}$ such that $M \leq 2^I \leq |A|$, and for each affine subspace $H \in \mathscr{F}_3$, we have 
\[ 2^{I} \leq |H \cap A| < 2^{I+1}, \] 
and
\[ \sum_{H \in \mathscr{F}_3 } |H \cap A| \gg_{d} |A|/{\log|A|}.  \]
This implies that
\begin{equation} \label{pgn2}
|\mathscr{F}_3| 2^{I} \gg_{d} |A|/{\log|A|}.
\end{equation}
\par

We now prove the following proposition.
\begin{Proposition} \label{induction1}
We have
\begin{equation*} 
|A+A| + |A\cdot A| \gs_{d} |A|( 2^{I \delta_{r}} + |\mathscr{F}_3|^{\delta_{d-r}}  ) .
\end{equation*}
\end{Proposition}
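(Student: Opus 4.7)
The plan is to use the slicing provided by $\mathscr{F}_3$ to split $A$ into equally large pieces $A_b$, each lying inside a translate of a fixed $r$-dimensional axis aligned subspace, and then separately to exploit (a) the internal sum-product structure of each slice (producing the $2^{I\delta_r}$ term) and (b) the sum-product structure of the index set of slices (producing the $|\mathscr{F}_3|^{\delta_{d-r}}$ term). By relabelling coordinates we may assume every $H \in \mathscr{F}_3$ has the form $\{b\} \times \mathbb{R}^r$ for some $b \in \mathbb{R}^{d-r}$. Let $B \subseteq \mathbb{R}^{d-r}$ be the collection of these $b$'s, so that $|B| = |\mathscr{F}_3|$, and for each $b \in B$ set $A_b = \{\tilde{a} \in \mathbb{R}^r : (b,\tilde{a}) \in A\}$; then $2^I \leq |A_b| < 2^{I+1}$. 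If some coordinate of $A$ were identically zero then $A$ would lie in an axis aligned affine subspace of dimension at most $d-1$ and Lemma~\ref{aasub} would already give a bound exceeding the claim, so we may assume all elements of $A$ have all coordinates nonzero; condition $\eqref{fml1}$ then forces $b \cdot b \neq b' \cdot b'$ for distinct $b, b' \in B$.

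For the first term, apply Lemma~\ref{aasub} to the slice $\{b\}\times A_b$, viewed as a subset of the $r$-dimensional axis aligned affine subspace $\{b\} \times \mathbb{R}^r$, to obtain $|A_b+A_b|+|A_b\cdot A_b| \gs_r |A_b|^{1+\delta_r} \gg 2^{I(1+\delta_r)}$. The sets $\{2b\}\times(A_b+A_b)$ (as $b$ ranges over $B$) are pairwise disjoint subsets of $A+A$, and the sets $\{b\cdot b\}\times(A_b\cdot A_b)$ are pairwise disjoint subsets of $A\cdot A$. Summing over $b$ and invoking $\eqref{pgn2}$ in the form $|B|\cdot 2^I \gs_d |A|$ yields
\[ |A+A|+|A\cdot A| \gs_r |B| \cdot 2^{I(1+\delta_r)} \gs_d |A|\cdot 2^{I\delta_r}. \]

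For the second term, observe that for every $s = b+b' \in B+B$ the fiber of $A+A$ above $s$ (in the first $d-r$ coordinates) contains $A_b+A_{b'}$, which has size at least $\max(|A_b|,|A_{b'}|) \geq 2^I$. Likewise, for every $p = b \cdot b' \in B\cdot B$ the fiber of $A\cdot A$ above $p$ contains $A_b\cdot A_{b'}$; since every element of $A_{b'}$ has all coordinates nonzero, multiplication by any such element is injective on $A_b$, so the fiber has size at least $2^I$ as well. Hence $|A+A|+|A\cdot A| \geq 2^I(|B+B|+|B\cdot B|)$. Since $B \subseteq \mathbb{R}^{d-r}$ with $1 \leq d-r \leq d-1$, the inductive hypothesis (Theorem~\ref{main} in dimension $d-r$) gives $|B+B|+|B\cdot B| \gs_{d-r} |B|^{1+\delta_{d-r}}$, and combining with $\eqref{pgn2}$ produces $|A+A|+|A\cdot A| \gs_d |A| \cdot |\mathscr{F}_3|^{\delta_{d-r}}$. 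Adding the two estimates finishes the proof.

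The hard part is the product-slice bookkeeping: to run the disjointness and fiber arguments in the product setting one must guarantee that $b \cdot b$ and $b \cdot b'$ are distinct points of $\mathbb{R}^{d-r}$ (for the relevant $b,b'$) and that multiplication by a fixed element of some $A_{b'}$ is injective on $A_b$. Both facts ultimately rest on the uniform sign hypothesis $\eqref{fml1}$ together with the preliminary reduction eliminating identically-zero coordinates.
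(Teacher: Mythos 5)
Your proposal follows the same two-part decomposition as the paper: slicing $A$ along the parallel subspaces in $\mathscr{F}_3$, applying Lemma~\ref{aasub} within each $r$-dimensional slice to get the $2^{I\delta_r}$ term, and applying the inductive hypothesis to the base set $B \subseteq \mathbb{R}^{d-r}$ of slice positions together with a fiber-counting argument to get the $|\mathscr{F}_3|^{\delta_{d-r}}$ term. The core argument is correct and matches the paper's.

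One remark on the extra preliminary step: the reduction to \enquote{all coordinates nonzero} is not actually needed, and its stated justification is slightly off. Condition $\eqref{fml1}$ already guarantees that if $b \neq b'$ in $B$ then they differ in a coordinate where both entries are nonzero and of the same sign, so $b\cdot b \neq b'\cdot b'$; likewise, any two distinct points of a slice differ in a coordinate that is nonzero on all of $A$, so multiplication by a fixed element is injective on the slice. No preliminary elimination of zero coordinates is required. Moreover, the claim that Lemma~\ref{aasub} \enquote{would already give a bound exceeding the claim} when $A$ lies in a $(d-1)$-dimensional axis aligned subspace is not correct as a statement about the Proposition itself: for $r < d-1$ one has $\delta_r > \delta_{d-1}$, so the Proposition's bound $|A|\,2^{I\delta_r}$ can exceed $|A|^{1+\delta_{d-1}}$. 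That reduction would be sound as a shortcut to Theorem~\ref{main} (since $\delta_{d-1} > \delta_d$), but not as a reduction internal to the proof of Proposition~\ref{induction1}. Since condition $\eqref{fml1}$ handles the zero-coordinate case directly, this is a harmless detour rather than a gap.
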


\begin{proof}
For simplicity, we will write $\mathscr{F}_3 = \{ H_1, \dots, H_{m} \}$ for some integer $m = |\mathscr{F}_3| \leq |A|$, and for each $1 \leq i \leq m$, we will write $B_i = A \cap H_i$. By definition of $\mathscr{F}_3$, we see that $|B_i| \geq 2^{I}$ for each $1 \leq i \leq m$. We note that for each $i \neq j$, the sets $B_i + B_i$ and $B_j + B_j$ are disjoint. To see this, we first remark that each $H_i$ is a translate of the same axis aligned subspace. Thus, without loss of generality, we have
\begin{equation} \label{defineai}
 H_i = \{a_i\} \times \mathbb{R}^{r} \ \text{with} \ a_i \in \mathbb{R}^{d-r}, \ \text{for each} \ 1 \leq i \leq m. 
 \end{equation}
This implies that $H_i + H_i = \{a_i + a_i \} \times \mathbb{R}^{r}$, and thus, $H_i + H_i$ is disjoint from $H_j + H_j$ whenever $i \neq j$. As $B_i + B_i \subseteq H_i + H_i$, our claim is proven. 
\par

Using condition $\eqref{fml1}$, we can argue similarly for the sequence of sets $B_i \cdot B_i$ for $1 \leq i \leq m$. Moreover, as each $B_i$ is contained in an axis aligned affine subspace $H_i$ of dimension $r$, we use Lemma $\ref{aasub}$ to get
\[ |B_i + B_i| + |B_i \cdot B_i| \gs_{r} |B_i|^{1+ \delta_r}. \] 
Thus we have
\begin{align*} 
|A+A| + |A \cdot A| & \geq \sum_{i=1}^{m}|B_i + B_i| + |B_i \cdot B_i|  \\
& \gs_{d} m|B_i|^{1+ \delta_r} \geq |\mathscr{F}_3| 2^{I} 2^{I \delta_r}.
\end{align*}
Combining $\eqref{pgn2}$ with this, we get
\begin{equation} \label{ineqone}
|A+A| + |A \cdot A| \gs_{d} |A| 2^{I \delta_r} .
\end{equation} 
This proves one of the lower bounds in Proposition $\ref{induction1}$.
\par

We now show the second part of our lower bound. We begin by considering the set $B' = \{ a_1, a_2, \dots, a_m \}$, where $a_i$ is defined in $\eqref{defineai}$ for each $1 \leq i \leq m$. As $B' \subseteq \mathbb{R}^{d-r}$, the inductive hypothesis implies that
\begin{equation} \label{proppt1}
 |B'+B'| + |B' \cdot B'| \gs_{d-r} |B'|^{1+ \delta_{d-r}} .
 \end{equation}
Moreover, given $a_i, a_j, a_k, a_l \in B'$, if 
\[ a_i + a_j \neq a_k + a_l, \]
then $H_i + H_j$ and $H_k + H_l$ are disjoint, and consequently, $B_i + B_j$ and $B_k + B_l$ are disjoint. Furthermore, we have
\[ |B_i + B_j| \geq |B_i| \geq 2^{I}, \ \text{for each} \ 1 \leq i, j \leq m. \]
Thus, we get 
\[ |A+A| \geq 2^{I} |B' + B'|. \]
We can similarly argue for the case of product sets to get
\[ |A \cdot A| \geq 2^{I} |B' \cdot B'|. \]
Combining these with $\eqref{proppt1}$, we see that
\[ |A+A| + |A \cdot A| \geq 2^{I}( |B'+B'| + |B' \cdot B'|) \gs_{d-r} 2^I |B'|^{1+ \delta_{d-r}}.\]
Since $|B'| = |\mathscr{F}_3|$, we apply $\eqref{pgn2}$ to get
\[  |A+A| + |A\cdot A| \gs_{d} 2^{I}|\mathscr{F}_3|^{1+ \delta_{d-r}} \gs_{d} |A||\mathscr{F}_3|^{\delta_{d-r}}. \]
This, along with $\eqref{ineqone}$, proves Proposition $\ref{induction1}$.
\end{proof}

We now combine $\eqref{pgn2}$ with Proposition $\ref{induction1}$ to get
\[ |A+A| + |A\cdot A| \gs_{d} |A|  (2^{I \delta_{r}} + |A|^{\delta_{d-r}} 2^{-I\delta_{d-r}} ).\]
Using elementary optimisation, we note that  
\[ x^{\delta_{r}} + |A|^{\delta_{d-r}} x^{-\delta_{d-r}} \geq  |A|^{1/(\delta_{r}^{-1} + \delta_{d-r}^{-1})} \]
for all $x$ in the domain $[1, 2|A|]$. Consequently, we have
\begin{equation} \label{optim}
2^{I \delta_{r}} + |A|^{\delta_{d-r}} 2^{-I\delta_{d-r}} \geq |A|^{1/(\delta_{r}^{-1} + \delta_{d-r}^{-1})}
\end{equation}
for each choice of $I \in [0, \ceil{\log|A|}]$. Since $\delta_{r} = \delta_{1}/r $ and $\delta_{d-r} = \delta_1/(d-r)$, we get
\[ |A|^{1/(\delta_{r}^{-1} + \delta_{d-r}^{-1})} = |A|^{\delta_1 d^{-1}}. \]
This, in turn, implies that 
\[ |A+A| + |A \cdot A| \gs_{d} |A| |A|^{\delta_1 d^{-1}} \gs_{d} |A|^{1 + \delta_{d}}.\] 
Thus we are done when $\eqref{pgn1}$ holds.
\par

We now assume that $\eqref{pgn1}$ does not hold, that is, 
\[ \sum_{H \in \mathscr{F}_1} | H \cap A| < |A|/10^d. \]
This implies that if we consider the set $A' = A \setminus (\cup_{H \in \mathscr{F}_1} H)$, we have
\[ |A'| \gg_{d} |A|, \]
and each axis aligned affine subspace $H$ contains at most $M$ points of $A'$. Given any $a \in \mathbb{R}$, consider the axis aligned affine subspace $H_a = \{a\} \times \mathbb{R}^{d-1}$. For each $a \in \mathbb{R}$, we have $|H_a \cap A'| < M$. Thus if we consider $A_1 \subseteq \mathbb{R}$ to be the set
\[A_1 = \{  a \ | \ (a, a_2, \dots , a_d) \in A' \ \text{for some} \ (a_2, \dots, a_d) \in \mathbb{R}^{d-1}  \}, \]
we see that
\[ |A_1| \geq |A'|/M \gg_{d} |A|/M. \] 
Moreover, as $A_1 \subseteq \mathbb{R}$, we have
\[ |A_1 + A_1| + |A_1 \cdot A_1| \gs |A_1|^{1 + \delta_{1}} \gs_{d} |A|^{1+ \delta_{1}}M^{-1- \delta_{1}}. \]
This implies that
\[ |A+A| + |A \cdot A| \geq |A_1 + A_1| + |A_1 \cdot A_1| \gs_{d} |A|^{1+ \delta_{1}}M^{-1- \delta_{1}}. \] 
Choosing $M$ to be a large constant that depends on $d$, we get
\begin{equation} \label{whatdd}
 |A+A| + |A \cdot A| \gs_{d} |A|^{1+ \delta_{1}} \gs_{d} |A|^{1+ \delta_{d}}, 
 \end{equation}
in which case, we are done. This finishes the proof of Theorem $\ref{main}$.
\par
As a remark, we note that we can not further optimise our result just by choosing a larger value of $M$. For instance, we consider the case when $d=2$. In order to strengthen $\eqref{optim}$ in this case, $M$ needs to exceed the optimisation value $|A|^{1/2}$. But in this range, we have
\[ |A|^{1+ \delta_1} M^{-1 - \delta_1} < |A|^{ (1+ \delta_1) /2}, \]
which significantly weakens $\eqref{whatdd}$, and consequently, weakens our result overall.

\bibliographystyle{amsbracket}

\begin{thebibliography}{18}

\bibitem{BL2019}
A. Basit, B. Lund, \emph{An improved sum-product bound for quaternions}, SIAM J. Discrete Math. \textbf{33} (2019), no. 2, 1044-1060.

\bibitem{BJ2014}
T. Bloom, T. Jones, \emph{A sum-product theorem in function fields}, Int. Math. Res. Not. IMRN 2014, no. 19, 5249-5263.

\bibitem{Ch2003}
M. C. Chang, \emph{A sum-product estimate in algebraic division algebras}, Israel J. Math. \textbf{150} (2005), 369-380.

\bibitem{Ch2007}
M. C. Chang, \emph{Additive and multiplicative structure in matrix spaces}, Combin. Probab. Comput. \textbf{16} (2007), no. 2, 219-238. 


\bibitem{ES1983}
P. Erd\H{o}s, E. Szemer\'{e}di, \emph{On sums and products of integers}, Studies in pure mathematics, 213-218, Birkhäuser, Basel, 1983. 

\bibitem{GS2016}
A. Granville, J. Solymosi, \emph{Sum-product formulae}, Recent trends in combinatorics, 419-451, IMA Vol. Math. Appl., 159, Springer, 2016.



\bibitem{KR2013}
S. V. Konyagin, M. Rudnev, \emph{On new sum-product-type estimates}, SIAM J. Discrete Math. \textbf{27} (2013), no. 2, 973-990.


\bibitem{Sh2018}
G. Shakan, \emph{On higher energy decompositions and the sum-product phenomenon}, Math. Proc. Cambridge Philos. Soc., \textbf{167} (2019), no. 3, 599-617.

\bibitem{So2005a}
J. Solymosi, \emph{On sum-sets and product-sets of complex numbers}, J. Th\'eor. Nombres Bordeaux \textbf{17} (2005), no. 3, 921-924. 



\bibitem{ST2012}
J. Solymosi, T. Tao, \emph{An incidence theorem in higher dimensions}, Discrete Comput. Geom. \textbf{48} (2012), no. 2, 255-280. 

\bibitem{SV2009}
J. Solymosi, V. H. Vu, \emph{Sum-product estimates for well-conditioned matrices}, Bull. Lond. Math. Soc. \textbf{41} (2009), no. 5, 817-822

\bibitem{SW2018}
J. Solymosi, C. Wong, \emph{An application of kissing number in sum-product estimates}, Acta Math. Hungar. \textbf{155} (2018), no. 1, 47-60. 

\bibitem{Ta2009}
T. Tao, \emph{The sum-product phenomenon in arbitrary rings}, Contrib. Discrete Math. \textbf{4} (2009), no. 2, 59-82.


\end{thebibliography}
\providecommand{\bysame}{\leavevmode\hbox to3em{\hrulefill}\thinspace}

\end{document}